\documentclass[11pt,reqno]{amsart}

\usepackage[utf8]{inputenc}
\usepackage[T1]{fontenc}
\usepackage{amsmath,amssymb,amsthm}
\usepackage{mathtools}
\usepackage{enumitem}
\usepackage[colorlinks=true,linkcolor=blue,citecolor=blue,urlcolor=blue]{hyperref}

\theoremstyle{plain}
\newtheorem{theorem}{Theorem}
\newtheorem{lemma}[theorem]{Lemma}

\theoremstyle{remark}
\newtheorem{remark}[theorem]{Remark}

\newcommand{\Z}{\mathbb{Z}}
\newcommand{\C}{\mathbb{C}}

\begin{document}

\title[Conway polynomials of $Th(p,q)$ and $Th(q,p)$]
      {Conway polynomials of $Th(p,q)$ and $Th(q,p)$:\\ a proof of Takemura's conjecture}

\author{Werner Alois Stanggassinger}
\address{Wasserburg am Inn, Bavaria, Germany}
\email{stanggassinger@stainlesssteel4u.de}

\thanks{This work was developed in collaboration with the AI system Claude Code (Anthropic):
the choice of problem, the direction of the work and the computing hardware are due to the author
(a sculptor); the formalisation, the algorithms and the proofs were developed by Claude Code
(Anthropic) working with the author. The collaboration is stated as a fact, not a disclaimer; details
are in the section on tool use at the end.}

\subjclass[2020]{Primary 57K10; Secondary 57K14}

\date{19 October 2026 (first version dated 5 October 2026, on Zenodo since 6 October 2026)}

\begin{abstract}
Takemura conjectured that the Conway polynomials $\sum a_\ell z^\ell$ of the Turk's head link $Th(p,q)$ and
$\sum b_\ell z^\ell$ of $Th(q,p)$ satisfy $a_\ell=b_\ell$ for $\ell\equiv0,1\pmod 4$ and $a_\ell=-b_\ell$ for $\ell\equiv2,3\pmod4$, and
proved it for $q=2,3$. We prove it for all $p,q\ge1$, knots and links. Equivalently,
$\nabla_{Th(q,p)}(z)=c\,\nabla_{Th(p,q)}(iz)$ with $c=1$ if $\gcd(p,q)$ is odd and $c=-i$ if it is even. The
proof combines a product formula $\nabla_{Th(p,q)}(z)^2=\prod_{j=1}^{p-1}\prod_{k=1}^{q-1}\big(z^2+2\cos\frac{2\pi k}{q}-2\cos\frac{2\pi j}{p}\big)$,
which comes from the reduced Burau representation, with the leading coefficients, read off the Seifert surface of the closed braid.
\end{abstract}

\maketitle

\section{Result}
\label{sec:result}

For $p\ge1$ let $A_p=\sigma_1\sigma_2^{-1}\sigma_3\sigma_4^{-1}\cdots\sigma_{p-1}^{\pm1}\in B_p$ ($A_1$ is the
trivial braid). The Turk's head link $Th(p,q)$ is the closure of $A_p^{\,q}$; it has $\mu=\gcd(p,q)$
components \cite[\S2]{DS}, and we orient it as a closed braid, all strands running in the same direction. For an oriented link $L$ let $\nabla_L(z)\in\Z[z]$ be its Conway polynomial
($\nabla_{\mathrm{unknot}}=1$, $\nabla_{L_+}-\nabla_{L_-}=z\nabla_{L_0}$); only powers $z^\ell$ with
$\ell\equiv\mu-1\pmod 2$ occur. We write $\Delta_L(t)=\nabla_L(t^{1/2}-t^{-1/2})$ for the Conway-normalised
Alexander polynomial.

Takemura \cite[Thm.~1.1]{Tak16} proved for $m\ge1$ and $n=2,3$ that, writing
$\nabla_{Th(m,n)}=\sum a_\ell z^\ell$ and $\nabla_{Th(n,m)}=\sum b_\ell z^\ell$,
\begin{equation}
\label{eq:rule}
a_\ell=b_\ell\ \text{ for } \ell\equiv0,1\pmod4,\qquad a_\ell=-b_\ell\ \text{ for } \ell\equiv2,3\pmod4,
\end{equation}
and conjectured \eqref{eq:rule} for all $m\ge1$, $n\ge2$ \cite[Conj.~5.1]{Tak16}. Di Prisa and \c{S}avk record the
conjecture next to their Problem~E \cite[\S6.4]{DS}.

\begin{theorem}
\label{thm:T}
\eqref{eq:rule} holds for all $p,q\ge1$, i.e.\ for $\nabla_{Th(p,q)}=\sum a_\ell z^\ell$ and $\nabla_{Th(q,p)}=\sum b_\ell z^\ell$.
\end{theorem}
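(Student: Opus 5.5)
We follow the route announced in the abstract. The rule \eqref{eq:rule} says exactly that $b_\ell=\varepsilon_\ell a_\ell$, where $\varepsilon_\ell=1$ for $\ell\equiv0,1$ and $\varepsilon_\ell=-1$ for $\ell\equiv2,3\pmod 4$. Only $\ell\equiv\mu-1\pmod2$ occurs, and $i^\ell$ equals $\varepsilon_\ell$ for $\ell$ even and $\varepsilon_\ell\, i$ for $\ell$ odd. Hence \eqref{eq:rule} is equivalent to
\[
\nabla_{Th(q,p)}(z)=c\,\nabla_{Th(p,q)}(iz),
\]
with $c=1$ if $\mu$ is odd and $c=-i$ if $\mu$ is even. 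So it suffices to prove this identity, and we do it in two steps: first up to sign via squares, then fixing the sign.

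\textbf{Step 1 (product formula).} We compute $\nabla_{Th(p,q)}(z)^2$ from the reduced Burau representation. For a closed braid $\hat\beta$ with $\beta\in B_p$, the Alexander polynomial is
\[
\Delta_{\hat\beta}(t)\doteq\frac{1-t}{1-t^p}\det\bigl(I-\bar\rho(\beta)\bigr).
\]
Since $\beta=A_p^q$, the determinant factors as $\prod_\lambda(1-\lambda^q)$ over the eigenvalues $\lambda$ of $\bar\rho(A_p)$. The key computation is the characteristic polynomial of $\bar\rho(A_p)$. The alternating product $\sigma_1\sigma_2^{-1}\cdots$ is a Coxeter-type element, and we expect its eigenvalues to be governed by
\[
\lambda+\lambda^{-1}=2-(t^{1/2}-t^{-1/2})^2+2\cos\tfrac{2\pi j}{p}\quad\text{or a similar relation},\qquad j=1,\dots,p-1.
\]
We would establish this by writing $\bar\rho(A_p)=XY$ with $X,Y$ the products of the odd-indexed and even-indexed generators respectively. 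Each of $X$ and $Y$ is an involution up to a scalar twist, so $XY+(XY)^{-1}$ is tridiagonal and diagonalisable by a discrete sine transform.

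We then use $1-\lambda^q=\prod_k(1-\zeta_q^k\lambda)$ and pair $\lambda$ with $\lambda^{-1}$. Each pair gives a factor of the form $2\cos\frac{2\pi k}{q}-(\lambda+\lambda^{-1})$, and substituting $z=t^{1/2}-t^{-1/2}$ should produce
\[
\nabla_{Th(p,q)}(z)^2=\pm\prod_{j=1}^{p-1}\prod_{k=1}^{q-1}\Bigl(z^2+2\cos\tfrac{2\pi k}{q}-2\cos\tfrac{2\pi j}{p}\Bigr).
\]
The $k=0$ factor should cancel against $(1-t^p)/(1-t)$. The units and sign are fixed by checking a single value, or the degree and leading term. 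This is the main obstacle: the bookkeeping of the $j=0$ and $k=0$ factors and of unit ambiguities.

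\textbf{Step 2 (comparison up to sign).} Swap $p$ and $q$ in the product and replace $z$ by $iz$. Each factor $z^2+2\cos\frac{2\pi j}{p}-2\cos\frac{2\pi k}{q}$ becomes $-\bigl(z^2+2\cos\frac{2\pi k}{q}-2\cos\frac{2\pi j}{p}\bigr)$. Hence
\[
\nabla_{Th(q,p)}(z)^2=(-1)^{(p-1)(q-1)}\,\nabla_{Th(p,q)}(iz)^2.
\]
Because $\C[z]$ is a domain, this forces
\[
\nabla_{Th(q,p)}(z)=c'\,\nabla_{Th(p,q)}(iz)
\]
for one constant $c'$ with $c'^2=(-1)^{(p-1)(q-1)}$.

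\textbf{Step 3 (fixing the constant).} It remains to show that $c'$ equals the stated $c$, and it suffices to compare top coefficients. The closed braid $A_p^q$ is a homogeneous braid in Stallings' sense, so Seifert's algorithm on its diagram gives a minimal-genus surface. By Cromwell's theorem, the top coefficient of $\nabla$ is $\pm1$ in degree
\[
n-p+1=(p-1)q-p+1=(p-1)(q-1),
\]
where $n=(p-1)q$ is the crossing number.

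The sign is computed as a product of contributions from each generator column. Each $\sigma_i^{\pm1}$ appears $q$ times; a positive column contributes $+1$ and a negative column contributes $(-1)^{q-1}$ to the leading coefficient. For example, a $(2,q)$ torus-link factor has leading coefficient $1$ for the positive version and $(-1)^{q-1}$ for its mirror. Since $A_p$ has $\lfloor (p-1)/2\rfloor$ negative letters, the leading coefficient of $\nabla_{Th(p,q)}$ is
\[
\epsilon(p,q)=(-1)^{(q-1)\lfloor (p-1)/2\rfloor}.
\]
With $N=(p-1)(q-1)$, we then verify that
\[
\epsilon(q,p)=c\,i^{N}\epsilon(p,q)
\]
by a parity case check on $p,q\bmod 4$. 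This identifies $c'=c$ and completes the proof.
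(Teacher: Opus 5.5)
Your architecture is the paper's (a Burau product formula for $\nabla_{Th(p,q)}^2$, the substitution $z\mapsto iz$ giving $c'^2=(-1)^N$, leading coefficients to choose $c'$). However, the two steps that carry the content are asserted rather than proved, and the one explicit relation you write down in Step~1 is off by $2$.

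Everything in Step~1 rests on the spectrum of the reduced Burau matrix $S(A_p)$. The correct statement is that, over the $p-1$ eigenvalues, the values $\lambda+\lambda^{-1}$ are $2\cos\frac{2\pi j}{p}-z^2$, $j=1,\dots,p-1$. Already for $p=3$, $S(\sigma_1\sigma_2^{-1})$ has determinant $1$ and trace $1-t-t^{-1}=-1-z^2$. Your relation predicts $1-z^2$ instead, and would give $\nabla_{Th(3,2)}\doteq z^2-3$ rather than the figure-eight's $1-z^2$, so it does not even lead to your own displayed product. The paper does not derive this spectrum either: it imports the factorisation of $\det(I-xS(A_p))$ from \cite[Thm.~2]{H}, and your $XY$ sketch would have to reproduce exactly that.

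There is a second problem in Step~1. For even $p$ one eigenvalue is $-t$ while $-t^{-1}$ is not an eigenvalue, so pairing $\lambda$ with $\lambda^{-1}$ breaks down. The paper instead pairs $(1+\zeta^kt)(1+\zeta^{-k}t)=t\,(z^2+2+2\cos\frac{2\pi k}{q})$ across $k\leftrightarrow q-k$, and treats $k=q/2$, where $1-t=-t^{1/2}z$, separately. Since you only want the square, a uniform repair is to multiply $\det(I-\zeta^kS)$ by $\det(I-\zeta^{-k}S)$ and use $(1-\zeta^k\lambda)(1-\zeta^{-k}\lambda)=\lambda\,(\lambda+\lambda^{-1}-2\cos\frac{2\pi k}{q})$ for every eigenvalue, paired or not.

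In Step~3, Stallings' fibredness of homogeneous closed braids (or Cromwell) gives degree $N$ and leading coefficient $\pm1$. That leaves $c'$ undetermined between the two square roots of $(-1)^N$, so fixing $c'$ rests entirely on your column rule, for which you give no reason. The rule is true, and it is exactly the paper's leading-coefficient lemma:
\begin{itemize}
\item Build the braid surface from discs $D_1,\dots,D_p$ stacked at heights $1,\dots,p$, with one half-twisted band per crossing.
\item Take the basis of curves $\gamma_{i,k}$ in the $i$-th column running through consecutive bands.
\item Curves from columns $i<j$ meet the level $h=i+1$ only in the horizontal disc $D_{i+1}$. After pushing one off along the normal they are separated by a plane, so one of the two cross linking numbers vanishes.
\item Hence the Seifert matrix is block triangular, with diagonal blocks Seifert matrices of the closures of $\sigma_1^{\pm q}$.
\item The coefficient of $z^n$ is $\eta^n\det V$ for a Seifert matrix of size $n$ and a fixed convention sign $\eta$. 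So the leading coefficient is the product of those of the $(2,\pm q)$ torus links, which are $1$ or $(-1)^{q-1}$ by the skein recursion.
\end{itemize}
This also gives nonvanishing, so Stallings is not needed; equivalently, top coefficients multiply under Murasugi sum. The closing parity check is routine; the paper does it via $c=(-1)^Ei^{-N}$ with $E=(p-1)\lfloor\frac{q-1}{2}\rfloor+(q-1)\lfloor\frac{p-1}{2}\rfloor$.
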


Here $i=\sqrt{-1}$. Since $i^\ell=(-1)^{\ell/2}$ for even $\ell$ and $-i\cdot i^\ell=(-1)^{(\ell-1)/2}$ for odd $\ell$, and since
$a_\ell=b_\ell=0$ unless $\ell\equiv\mu-1\pmod2$, Theorem~\ref{thm:T} is equivalent to
\begin{equation}
\label{eq:iz}
\nabla_{Th(q,p)}(z)=c_\mu\,\nabla_{Th(p,q)}(iz),\qquad c_\mu=\begin{cases}1,&\mu\ \text{odd},\\-i,&\mu\ \text{even}.\end{cases}
\end{equation}
Mirroring both links replaces $\nabla(z)$ by $\nabla(-z)$ on both sides, so \eqref{eq:rule} and \eqref{eq:iz}
do not depend on the handedness convention for $A_p$. If $p=1$ or $q=1$ both links are unknots and
\eqref{eq:rule} is trivial; we assume $p,q\ge2$. For $p=q$, \eqref{eq:rule} says that $a_\ell=0$ for all
$\ell\equiv2,3\pmod4$.

\begin{remark}[What we have read]
\label{rem:read}
We have read \cite{Tak16} (in its accepted manuscript, Kobe University Repository,
\href{https://hdl.handle.net/20.500.14094/90003785}{hdl:20.500.14094/90003785}), \cite{DS} and Takemura's doctoral thesis \cite{TakD} (cited in \cite{Sa2} as
``Tech.\ Report 7116''). The thesis proves the cases $n=2,3$, states ``We can not prove this Theorem for $n\ge4$
yet'', and repeats the conjecture; its Section~6 gives a Seifert matrix of $Th(m,n)$ of the block form described in
Section~\ref{sec:lc} \cite[Lemma~6.1]{TakD}. Takemura's article \cite{Tak18} is quoted in \cite[\S4.6]{DS} for a
recursive determinant formula for $\Delta_{Th(p,q)}$ and a divisibility theorem; we have not read it. A search of
Semantic Scholar on 5 October 2026 found \cite{DS} and \cite{Tak18} as the only works citing \cite{Tak16}, and
\cite{DS} records it as a conjecture, in its section on open problems (\S6.4).

What is known about the product formula: Blackwell, Das, Mayer, Moyar, Quraishi and Stees \cite[Thm.~3.5]{BDMMQS}
write the Alexander polynomial of a spiral knot, and by their Section~5 of a spiral link, as a product over the
non-trivial $q$-th roots of unity of a recursively defined polynomial $C_{p-1}$; the closed form of the factors used
below is \cite[Thm.~2]{H}.
\end{remark}

\section{The product formula}
\label{sec:product}

Put $u_k=2\cos(2\pi k/q)$, $v_j=2\cos(2\pi j/p)$ and
\[
P_{p,q}(z):=\prod_{j=1}^{p-1}\prod_{k=1}^{q-1}\big(z^2+u_k-v_j\big).
\]
Its coefficients are rational integers: they are algebraic integers, and the Galois group of $\mathbb{Q}(e^{2\pi i/pq})$ permutes the $u_k$ among themselves and the $v_j$ among themselves.

\begin{lemma}
\label{lem:P}
For $p,q\ge2$, $\nabla_{Th(p,q)}(z)^2=P_{p,q}(z)$.
\end{lemma}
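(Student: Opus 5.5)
The plan is to compute $\Delta_{Th(p,q)}(t)$ from the reduced Burau representation $\rho\colon B_p\to GL_{p-1}(\Z[t^{\pm1}])$ and then pass to $z$ via $z^2=t-2+t^{-1}$. The starting point is the classical formula for a closed braid $\hat\beta$ with $\beta\in B_p$:
\[
\Delta_{\hat\beta}(t)\doteq\frac{1-t}{1-t^p}\det\big(I-\rho(\beta)\big),
\]
where $\doteq$ means equality up to a unit $\pm t^m$. Take $\beta=A_p^q$ and let $\omega=e^{2\pi i/q}$. Factoring $1-x^q=\prod_{k=0}^{q-1}(1-\omega^k x)$ on the eigenvalues of $\rho(A_p)$ gives
\[
\det\big(I-\rho(A_p)^q\big)=\prod_{k=0}^{q-1}\det\big(I-\omega^k\rho(A_p)\big).
\]
The $k=0$ factor is $\det(I-\rho(A_p))\doteq\frac{1-t^p}{1-t}\Delta_{\hat A_p}$. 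Since $\hat A_p$ is the unknot, this factor cancels the prefactor exactly. Hence
\[
\Delta_{Th(p,q)}(t)\doteq\prod_{k=1}^{q-1}f_k(t),\qquad f_k(t):=\det\big(I-\omega^k\rho(A_p)\big).
\]

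The core step is the closed form of $f_k$. This is the factor computed in \cite[Thm.~2]{H}, which I may quote. For a self-contained argument I would proceed as follows. The matrix $\rho(\sigma_i^{\pm1})$ differs from the identity only in rows $i-1,i,i+1$. Therefore $I-x\rho(A_p)$, for the alternating word $A_p$, can be reduced by row operations to a tridiagonal (or block-bidiagonal) matrix. Expanding its determinant along the last row gives a three-term recurrence in $p$ of Chebyshev type. The goal is to show that, after multiplying by a suitable unit,
\[
f_k(t)\doteq\prod_{j=1}^{p-1}\big(g_k(t)-v_j\big)
\]
for an explicit Laurent expression $g_k$ in $t$ and $\omega^{k}$. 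The reason is that the $v_j=2\cos(2\pi j/p)$ are exactly the roots of the Chebyshev-type polynomial $\prod_{j=1}^{p-1}(w-v_j)$ that solves that recurrence. The expected shape is that
\[
g_k(t)\,g_{q-k}(t)\quad\text{and}\quad g_k+g_{q-k}
\]
combine so that
\[
f_k(t)\,f_{q-k}(t)\doteq\prod_{j=1}^{p-1}\big(t-2+t^{-1}+u_k-v_j\big).
\]
Here the term $u_k=\omega^k+\omega^{-k}$ arises from $\omega^k\cdot\omega^{q-k}$-type cross terms. This determinant evaluation, and the pairing of $k$ with $q-k$, is the step I expect to be the main obstacle. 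I would first verify it for $p=2,3$ by hand, where $\rho(A_p)$ is $1\times1$ and $2\times2$, and then extract the recurrence.

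With the pairing in hand, squaring gives
\[
\Delta_{Th(p,q)}(t)^2\doteq\prod_{k=1}^{q-1}f_k f_{q-k}=\prod_{k=1}^{q-1}\prod_{j=1}^{p-1}\big(z^2+u_k-v_j\big)=P_{p,q}(z).
\]
It remains to remove the unit. Both $\nabla_{Th(p,q)}(z)^2$ and $P_{p,q}(z)$ are polynomials in $z^2$, hence invariant under $t\mapsto t^{-1}$. A unit $\pm t^m$ relating two such symmetric Laurent polynomials must have $m=0$. The remaining sign is fixed by comparing leading coefficients in $z$: $P_{p,q}$ is monic, and the square of the leading coefficient of $\nabla$ is positive. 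This sign is the only ambiguity, because we work with squares. Hence $\nabla_{Th(p,q)}(z)^2=P_{p,q}(z)$.
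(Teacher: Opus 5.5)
Your outer frame matches the paper's and is sound. In two places it is tidier: you get the $k=0$ factor from Burau's formula applied to the unknot $\widehat{A_p}$ rather than citing it. And by writing $\Delta^2\doteq\prod_k f_kf_{q-k}=(\prod_k f_k)^2$ and removing the unit only at the end (symmetry under $t\mapsto t^{-1}$, monic leading coefficient of $P_{p,q}$), you avoid the separate bookkeeping for $k=q/2$ and the auxiliary polynomial $G$.

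The gap is the step that carries the lemma, the evaluation of $f_k$. You do not carry it out, and the shape you propose for it is false. For $p\ge3$ there is in general no Laurent expression $g_k$ with $f_k\doteq\prod_{j=1}^{p-1}(g_k-v_j)$. For odd $p$ the right-hand side is a perfect square, $\prod_{j\le(p-1)/2}(g_k-v_j)^2$, because $v_j=v_{p-j}$. But for $p=3$, $q=2$ (the figure-eight knot) the only factor is $f_1=-(t-3+t^{-1})=-(z^2-1)$, whose roots $t=(3\pm\sqrt5)/2$ are simple.

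Nor does $u_k$ come from cross terms between $k$ and $q-k$. It is already present in each single $f_k$, through $\zeta^k+\zeta^{-k}$, and each $f_k$ contains every pair $\{j,p-j\}$ with $j<p/2$ exactly once. So the recurrence you plan to extract cannot be solved by $\prod_{j=1}^{p-1}(w-v_j)$ evaluated at a Laurent expression. The hand check at $p=3$ that you propose would already have shown this.

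What closes the gap is the actual content of \cite[Thm.~2]{H}, which you may quote. With $m=\lfloor(p-1)/2\rfloor$ and $c_j=2+v_j$, it states $\det(I-xS(A_p))=(1+xt)^{[p\ \mathrm{even}]}\,t^{-m}\prod_{j=1}^{m}\big(xt^2+(x^2-c_jx+1)t+x\big)$. At $x=\zeta^k$ each quadratic factor equals $\zeta^kt\,(z^2+u_k-v_j)$. For even $p$ one has $(1+\zeta^kt)(1+\zeta^{-k}t)=t\,(z^2+u_k-v_{p/2})$, since $v_{p/2}=-2$. Hence, for $f_k=\det(I-\zeta^kS(A_p))$, the powers of $\zeta^k$ cancel in the product and $f_kf_{q-k}=t^{[p\ \mathrm{even}]}\prod_{j=1}^{p-1}(z^2+u_k-v_j)$. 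This is exactly the paired identity you were aiming for; with it inserted, the rest of your argument goes through as written.
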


The proof uses two cited facts.

\smallskip
\noindent\textbf{(B) Burau's theorem.} For a braid $\beta\in B_p$ and $S$ the reduced Burau matrix, obtained from
Birman's matrices (3-26) by deleting the last row and column \cite[p.~122]{Bir},
$(1+t+\dots+t^{p-1})\,\tilde\nabla_{\hat\beta}(t)=\det\big(S(\beta)-I\big)$ \cite[Thm.~3.11, (3-28)]{Bir}, where
$\tilde\nabla$ is the reduced (one-variable) Alexander polynomial of the closure; by Birman's footnote~7 on p.~120
this holds for links of any number of components.

\smallskip
\noindent\textbf{(S) Two standard facts}, used without re-proof: (S1) $\tilde\nabla_L(t)$ agrees, up to a unit
$\pm t^{k}$, $k\in\tfrac12\Z$, with $\det(t^{1/2}V-t^{-1/2}V^{\mathsf T})$ for any Seifert matrix $V$ of $L$;
(S2) there is a sign $\eta\in\{\pm1\}$, fixed by the conventions and the same for all oriented links, such that
$\nabla_L(t^{1/2}-t^{-1/2})=\eta^{\,n}\det(t^{1/2}V-t^{-1/2}V^{\mathsf T})$ for every Seifert matrix $V$ of $L$ of size $n$.
(SageMath computes $\nabla$ through a Seifert matrix; the checks in Section~\ref{sec:checks} agree with (S1) and (S2).)

\smallskip
\noindent\textbf{(F) The factorisation of \cite{H}.} With $m=\lfloor(p-1)/2\rfloor$ and $c_j=4\cos^2(\pi j/p)=2+v_j$,
\[
\det\big(I-x\,S(A_p)\big)=(1+xt)^{[p\ \mathrm{even}]}\;t^{-m}\prod_{j=1}^{m}\big(x\,t^{2}+(x^{2}-c_jx+1)t+x\big)
\]
for $p\ge3$ \cite[Thm.~2]{H}; for $p=2$, $S(A_2)=(-t)$ and the same formula holds with $m=0$.

\begin{proof}[Proof of Lemma~\ref{lem:P}]
Let $\zeta=e^{2\pi i/q}$ and $z=t^{1/2}-t^{-1/2}$, so $t+t^{-1}=z^2+2$. Since
$\det(I-S^q)=\prod_{k=0}^{q-1}\det(I-\zeta^kS)$ and the $k=0$ factor is $t^{-m}(1+t+\dots+t^{p-1})$
\cite[proof of Cor.~4]{H}, (B), (S1), (S2) and (F) give, up to a unit $\pm t^{\nu/2}$, $\nu\in\Z$ (the scalars $\zeta^k$ below multiply to $\pm1$ over $1\le k\le q-1$),
\[
\nabla_{Th(p,q)}\big(t^{1/2}-t^{-1/2}\big)\;\doteq\;\prod_{k=1}^{q-1}\Big[(1+\zeta^kt)^{[p\ \mathrm{even}]}\prod_{j=1}^{m}Q_j(t,\zeta^k)\Big],
\]
where $Q_j(t,x)=x\,t^{2}+(x^{2}-c_jx+1)t+x$.
For $x=\zeta^k$ we have $x+x^{-1}=u_k$, hence $Q_j(t,x)=xt\,(t+t^{-1}+u_k-c_j)=xt\,(z^2+u_k-v_j)$. For even $p$ the
linear factors pair up: for $k\ne q-k$, $(1+\zeta^kt)(1+\zeta^{-k}t)=t\,(t+t^{-1}+u_k)=t\,(z^2+u_k-v_{p/2})$, because
$v_{p/2}=-2$; for even $q$ the unpaired factor $k=q/2$ is $1-t=-t^{1/2}z$. So $\nabla_{Th(p,q)}(z)\doteq G(z)$ with
\[
G(z)=\prod_{k=1}^{q-1}\prod_{j=1}^{m}\big(z^2+u_k-v_j\big)\cdot
\Big[\prod_{k=1}^{\lfloor (q-1)/2\rfloor}\big(z^2+u_k-v_{p/2}\big)\Big]^{[p\ \mathrm{even}]}\cdot z^{[p,q\ \mathrm{even}]}.
\]
Now compare $G^2$ with $P_{p,q}$. Since $v_j=v_{p-j}$, the indices $j$ and $p-j$ give equal factors, and $j=p/2$
(for even $p$) is the only fixed point; likewise $u_k=u_{q-k}$ with fixed point $k=q/2$ (for even $q$). Hence
\[
P_{p,q}=\prod_{k=1}^{q-1}\prod_{j=1}^{m}(z^2+u_k-v_j)^2\cdot\Big[\prod_{k=1}^{q-1}(z^2+u_k-v_{p/2})\Big]^{[p\ \mathrm{even}]},
\]
and the last product equals $\prod_{k\le\lfloor(q-1)/2\rfloor}(z^2+u_k-v_{p/2})^2$, times $z^2+u_{q/2}-v_{p/2}=z^2$ when
$q$ is also even. Thus $P_{p,q}=G^2$.

It remains to remove the unit. Write $s=t^{1/2}$; then $\nabla(s-s^{-1})=\pm s^{k}G(s-s^{-1})$ for some $k\in\Z$.
Both $\nabla(s-s^{-1})$ and $G(s-s^{-1})$ are invariant under $s\mapsto-s^{-1}$, so $s^k=(-1)^ks^{-k}$, which forces
$k=0$ because $G\ne0$. Hence $\nabla_{Th(p,q)}=\pm G$ and $\nabla_{Th(p,q)}^2=P_{p,q}$.
\end{proof}

\begin{remark}[Determinants]
\label{rem:det}
At $t=-1$, i.e.\ $z=2i$, $|\nabla_L(2i)|=|\Delta_L(-1)|=\det L$, so Lemma~\ref{lem:P} gives
$\det Th(p,q)=|P_{p,q}(2i)|^{1/2}$, where $-4+u_k-v_j=-\big(4\sin^2\frac{\pi k}{q}+4\cos^2\frac{\pi j}{p}\big)$. Let $p$ be even.
The factors with $j$ and $p-j$ are equal since $v_j=v_{p-j}$; we write $\cos^2\frac{\pi j}{p}=\sin^2\frac{\pi(p/2-j)}{p}$, and the factors with $j=p/2$
contribute $\prod_{k=1}^{q-1}4\sin^2\frac{\pi k}{q}=q^2$ to $|P_{p,q}(2i)|$; hence
\[
\det Th(p,q)=q\prod_{k=1}^{q-1}\prod_{j=1}^{p/2-1}\Big(4\sin^2\frac{\pi k}{q}+4\sin^2\frac{\pi j}{p}\Big).
\]
This is the
determinant formula of Dowdall, Mattman, Meek and Solis for even $p\ge4$, $q\ge3$, as recorded in \cite[Thm.~H]{DS}.
\end{remark}

\section{Leading coefficients}
\label{sec:lc}

We read the leading coefficient off the Seifert surface of the closed braid $\widehat{A_p^{\,q}}$, so that the
orientation conventions are the same for $Th(p,q)$ and $Th(q,p)$. Write $A_p^{\,q}$ with the letters
$\sigma_i^{s_i}$, $s_i=(-1)^{i+1}$, and call $\sigma_1$ a positive crossing (the opposite convention is discussed
after Lemma~\ref{lem:lc}).

\smallskip
\noindent\emph{The surface.} The Seifert circles of the closed braid are $p$ concentric circles, all oriented the
same way. Let $F$ consist of discs $D_1,\dots,D_p$ bounded by them, $D_i$ at height $h=i$, and of one half-twisted
band for each crossing; the $q$ bands $B_{i,1},\dots,B_{i,q}$ of the letters $\sigma_i^{s_i}$ join $D_i$ to $D_{i+1}$
inside the slab $i\le h\le i+1$. Then $F$ is a connected Seifert surface with $\chi(F)=p-q(p-1)$, so
$H_1(F)$ has rank $N=(p-1)(q-1)$. For each $i$, $F_i=D_i\cup D_{i+1}\cup\bigcup_k B_{i,k}$ is the surface of Seifert's
algorithm for the closure of the $2$-braid $\sigma_1^{s_iq}$, and $H_1(F)$ has the basis $\gamma_{i,k}$
($1\le i\le p-1$, $1\le k\le q-1$), where $\gamma_{i,k}\subset F_i$ runs through $B_{i,k}$ and $B_{i,k+1}$.

\smallskip
\noindent\emph{The Seifert matrix is block triangular.} Let $V(a,b)=\operatorname{lk}(a,b^+)$, $b^+$ the push-off of $b$
to the positive side of $F$, and order the basis by $i$. For $j\ge i+1$ the curves $\gamma_{i,k}\subset\{h\le i+1\}$
and $\gamma_{j,l}\subset\{h\ge i+1\}$ meet the plane $h=i+1$ at most in $D_{i+1}$, where $F$ is horizontal. If the positive
normal of the discs points up, the push-off $\gamma_{j,l}^+$ can be taken in $\{h>i+1\}$, so $\gamma_{i,k}$ and
$\gamma_{j,l}^+$ are separated by a plane and $V(\gamma_{i,k},\gamma_{j,l})=0$; if it points down, the same argument gives
$V(\gamma_{j,l},\gamma_{i,k})=0$. Either way $V$ is block triangular, and its diagonal block $V_i$ is the Seifert matrix
of $F_i$ in the basis $\gamma_{i,\cdot}$ (the linking numbers of curves on $F_i$ and their push-offs do not see the rest of $F$).
This is the block form of the Seifert matrices in \cite[Lemma~6.1]{TakD} and \cite[\S4.6]{DS}.

\begin{lemma}
\label{lem:lc}
With $\sigma_1$ a positive crossing, $\nabla_{Th(p,q)}$ has degree $N=(p-1)(q-1)$ and leading coefficient
$(-1)^{(q-1)\lfloor (p-1)/2\rfloor}$.
\end{lemma}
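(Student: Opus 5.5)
The plan is to read the leading coefficient off (S2) using the block-triangular Seifert matrix $V$ of size $N=(p-1)(q-1)$ constructed above. Write $s=t^{1/2}$, so $z=s-s^{-1}$. By (S2),
\[
\nabla_{Th(p,q)}(s-s^{-1})=\eta^{N}\det(sV-s^{-1}V^{\mathsf T}).
\]
The right-hand side is a Laurent polynomial in $s$ with top term $\eta^N\det(V)\,s^N$. A polynomial $\sum a_\ell z^\ell$ evaluated at $z=s-s^{-1}$ has top term $a_{\deg}\,s^{\deg}$. Hence, once $\det V\neq0$, $\nabla_{Th(p,q)}$ has degree exactly $N$ and leading coefficient $\eta^N\det V$. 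Since $V$ is block triangular, $\det V=\prod_{i=1}^{p-1}\det V_i$. So the whole problem reduces to the $2$-braid blocks, and the claim becomes
\[
\prod_{i=1}^{p-1}\eta^{q-1}\det V_i=(-1)^{(q-1)\lfloor(p-1)/2\rfloor}.
\]

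\emph{Positive blocks.} For odd $i$, $s_i=+1$ and $F_i$ is the Seifert-algorithm surface of the closure of $\sigma_1^{q}$, the positive torus link $T(2,q)$. Its $(q-1)\times(q-1)$ matrix $V_i$ is, by the argument above applied to this $2$-braid alone, a Seifert matrix of $T(2,q)$ of size $q-1$. So (S2) applied to $T(2,q)$ shows that $\eta^{q-1}\det V_i$ is the $z^{q-1}$-coefficient of $\nabla_{T(2,q)}$, provided that polynomial has degree $q-1$. The skein relation at one crossing gives
\[
\nabla_{T(2,q)}=z\,\nabla_{T(2,q-1)}+\nabla_{T(2,q-2)},\qquad \nabla_{T(2,0)}=0,\quad \nabla_{T(2,1)}=1.
\]
By induction this polynomial is monic of degree $q-1$, so $\eta^{q-1}\det V_i=1$. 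Under this normalisation the signs are consistent because the conventions are fixed once for all links in (S2), and $\sigma_1$ is declared positive.

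\emph{Negative blocks.} For even $i$, $s_i=-1$ and $F_i$ is built from $q$ negative half-twisted bands. Reflecting $\mathbb{R}^3$ in a vertical plane carries the surface for $\sigma_1^{q}$ onto the surface for $\sigma_1^{-q}$. This reflection preserves heights, so the upward normal (and hence the push-off side) is preserved. It carries the basis curves $\gamma_{\cdot,k}$ to the corresponding basis curves, and it negates all linking numbers. Therefore $V_i=-V_+$, where $V_+$ is the positive block, and $\eta^{q-1}\det V_i=(-1)^{q-1}$. This step is the one I expect to need the most care. One must check that the reflected curves really are the chosen $\gamma_{i,k}$ with the same orientations, and that the positive side of $F$ on $D_i,D_{i+1}$ matches the one used for the positive blocks. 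If the identification yields $-V_+^{\mathsf T}$ instead, the determinant is unchanged.

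\emph{Counting.} The number of even $i$ in $\{1,\dots,p-1\}$ is $\lfloor(p-1)/2\rfloor$. Multiplying the contributions of the positive and negative blocks gives
\[
\eta^N\det V=(-1)^{(q-1)\lfloor(p-1)/2\rfloor}\neq0,
\]
which proves both the degree claim and the value of the leading coefficient.
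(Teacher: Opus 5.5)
Your proposal is correct and follows the paper's proof. (S2) identifies the $z^N$-coefficient with $\eta^N\det V$, block triangularity reduces everything to the $2$-braid blocks, and the skein recursion for the closure of $\sigma_1^{q}$ gives $\eta^{q-1}\det V_i=1$ on the positive blocks.

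The one difference is on the negative blocks. There the paper simply runs the skein recursion $\nabla_{L_{-n}}=-z\nabla_{L_{-(n-1)}}+\nabla_{L_{-(n-2)}}$ for the closure of $\sigma_1^{-q}$, which has leading coefficient $(-1)^{q-1}$. Your reflection argument $V_i=-V_+$ is a valid alternative. The basis, orientation and push-off checks you worry about are not needed, because $\det V_i$ is unchanged under unimodular change of basis of $H_1(F_i)$ and under transposition.
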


\begin{proof}
By (S2), for a Seifert matrix $V$ of size $n$ the polynomial $\nabla_L$ has degree at most $n$, and its coefficient of
$z^n$ is $\eta^n\det V$: the coefficient of $t^{n/2}$ in $\det(t^{1/2}V-t^{-1/2}V^{\mathsf T})$ is $\det V$, and $z^n$
is the only power of $z=t^{1/2}-t^{-1/2}$ containing $t^{n/2}$, with coefficient $1$.

For the closure $L_n$ of $\sigma_1^{n}$ ($n\ge0$) the skein relation at one crossing gives
$\nabla_{L_n}=z\nabla_{L_{n-1}}+\nabla_{L_{n-2}}$, with $\nabla_{L_0}=0$ (two-component unlink) and $\nabla_{L_1}=1$; so
$\nabla_{L_n}$ has degree $n-1$ and leading coefficient $1$. For the closure $L_{-n}$ of $\sigma_1^{-n}$ likewise
$\nabla_{L_{-n}}=-z\nabla_{L_{-(n-1)}}+\nabla_{L_{-(n-2)}}$, leading coefficient $(-1)^{n-1}$. Since $\partial F_i=L_{s_iq}$
and $V_i$ has size $q-1$, this gives $\eta^{q-1}\det V_i=1$ for $s_i=1$ and $=(-1)^{q-1}$ for $s_i=-1$; in particular
$\det V_i\ne0$. Hence $\det V=\prod_i\det V_i\ne0$, so $\nabla_{Th(p,q)}$ has degree exactly $N$, and its leading coefficient is
\[
\eta^N\det V=\prod_{i=1}^{p-1}\eta^{q-1}\det V_i=\prod_{i:\,s_i=-1}(-1)^{q-1}=(-1)^{(q-1)\lfloor(p-1)/2\rfloor},
\]
as $s_i=-1$ exactly for the $\lfloor(p-1)/2\rfloor$ even indices $i\le p-1$. The unknown sign $\eta$ has cancelled.
\end{proof}

With the opposite convention ($\sigma_1$ negative) every $s_i$ changes sign, and the exponent becomes
$(q-1)\lceil(p-1)/2\rceil$; this is the mirror image, and by Section~\ref{sec:result} the theorem does not depend on it.
The value of $c$ below is also the same with either exponent.

\section{Proof of Theorem~\ref{thm:T}}
\label{sec:proof}

Let $p,q\ge2$, $\mu=\gcd(p,q)$, $N=(p-1)(q-1)$. Interchanging the roles of $j$ and $k$,
\[
P_{q,p}(z)=\prod_{k=1}^{q-1}\prod_{j=1}^{p-1}\big(z^2-u_k+v_j\big)=(-1)^N\prod_{k,j}\big((iz)^2+u_k-v_j\big)=(-1)^NP_{p,q}(iz).
\]
By Lemma~\ref{lem:P}, $\nabla_{Th(q,p)}(z)^2=(-1)^N\nabla_{Th(p,q)}(iz)^2$ in $\C[z]$, so
$\nabla_{Th(q,p)}(z)=c\,\nabla_{Th(p,q)}(iz)$ for a constant $c$ with $c^2=(-1)^N$. Comparing coefficients of $z^N$ by Lemma~\ref{lem:lc} (both leading coefficients are $\pm1$),
\[
c=(-1)^{E}\;i^{-N},\qquad E=(p-1)\lfloor (q-1)/2\rfloor+(q-1)\lfloor(p-1)/2\rfloor .
\]
\emph{$p,q$ odd.} $E=(p-1)(q-1)$ is even, and $N\equiv0\pmod4$; so $c=1=c_\mu$.

\emph{$p$ even, $q$ odd} (the case $p$ odd, $q$ even is symmetric). $E=(p-1)\tfrac{q-1}{2}+(q-1)\tfrac{p-2}{2}
=\tfrac{q-1}{2}(2p-3)\equiv\tfrac{q-1}2\pmod2$, and since $q-1$ is even and $p-1$ odd,
$i^{-N}=\big(i^{q-1}\big)^{-(p-1)}=\big((-1)^{(q-1)/2}\big)^{-(p-1)}=(-1)^{(q-1)/2}$. So $c=1=c_\mu$.

\emph{$p=2a$, $q=2b$.} $E=(2a-1)(b-1)+(2b-1)(a-1)\equiv a+b\pmod2$, and $N=4ab-2a-2b+1$, so
$i^{-N}=-i\,(-1)^{a+b}$. So $c=-i=c_\mu$.

(With $\lceil\cdot\rceil$ in place of $\lfloor\cdot\rfloor$, $E$ changes by $[p\text{ even}](q-1)+[q\text{ even}](p-1)$,
which is even in every case.)

In all cases \eqref{eq:iz} holds, which is Theorem~\ref{thm:T}. \qed

\begin{remark}[Degree for links]
\label{rem:degree}
For links the degree of $\Delta_{Th(p,q)}$ (span in $t$) is $N=(p-1)(q-1)=2g_3+\mu-1$ (Lemma~\ref{lem:lc}); the
expression $(p-1)(q-1)+1-\gcd(p,q)$ in \cite[\S4.6(2)]{DS} is $2g_3$ \cite[Thm.~E]{DS} and agrees with it exactly for
knots, where $\mu=1$; for links the span is $2g_3+\mu-1$. For instance $\nabla_{Th(2,2)}=\pm z$ has degree $1$.
\end{remark}

\section{Checks}
\label{sec:checks}

The proof does not depend on them. Conway polynomials were computed with SageMath's
\texttt{Link.conway\_polynomial()} (Seifert-matrix route, independent of Burau) for the closure of $A_p^{\,q}$.
\begin{itemize}[leftmargin=1.4em]
\item \eqref{eq:rule} holds for all $28$ pairs $2\le p\le q\le 8$ (knots and links); the variant without the sign
  change fails in all $21$ pairs with $p\ne q$.
\item All $29$ Conway polynomials listed in \cite[\S1 and \S5]{Tak16} are reproduced: $18$ exactly, and the
  $11$ with even $\mu$ up to $z\mapsto-z$, i.e.\ our $A_p$ is the mirror of Takemura's convention. This is visible only
  for even $\mu$: for odd $\mu$ the link has an odd number $\mu$ of components, and its Conway polynomial contains only even
  powers of $z$.
\item \eqref{eq:iz} holds exactly (coefficients in $\mathbb{Q}(i)$) for all $81$ pairs $2\le p,q\le10$, and the degree
  and leading coefficient of Lemma~\ref{lem:lc} hold in all $81$; with $c=+i$ instead of $-i$ the identity fails in all
  $25$ pairs with $p,q$ even.
\item Lemma~\ref{lem:P} exactly (coefficients in cyclotomic fields) for all $64$ pairs $2\le p,q\le9$; the variant with factors $z^2-u_k+v_j$ fails in all $56$ pairs with $p\ne q$.
\item The block matrix $V$ of \cite[\S4.6]{DS} (diagonal blocks alternately $A_q$ and $-A_q^{\mathsf T}$, below them $-A_q$,
  $A_q$ with $-1$ on the diagonal and $1$ above it) for $2\le p,q\le8$ ($49$ pairs): $\det(t^{1/2}V-t^{-1/2}V^{\mathsf T})$
  equals SageMath's Conway polynomial exactly for odd $\mu$ ($33$) and up to $z\mapsto-z$ for even $\mu$ ($16$);
  $\det V=(-1)^{(q-1)\lceil(p-1)/2\rceil}$, and this is the leading coefficient of that polynomial, in all $49$
  (in this matrix convention the exponent has $\lceil\cdot\rceil$; cf.\ the remark after Lemma~\ref{lem:lc}); with
  all diagonal blocks equal to $A_q$ (no $-A_q^{\mathsf T}$) the result is wrong in all pairs with $p\ge3$ ($42$ of the $49$).
\item The value of $c$ in Section~\ref{sec:proof} equals $c_\mu$ for all $2\le p,q\le400$ ($159\,201$ pairs), with
  $\lfloor\cdot\rfloor$ and with $\lceil\cdot\rceil$.
\end{itemize}
The scripts and their outputs are deposited at
\href{https://doi.org/10.5281/zenodo.23183920}{doi:10.5281/zenodo.23183920}.

\subsection*{Tool use}
This work was prepared with substantial use of Claude Code (Anthropic), an agentic AI system based on
large language models, working together with the author.
\begin{itemize}[leftmargin=1.4em]
\item \emph{Mathematics.} Takemura's conjecture was among the open problems found in a literature search
carried out with Claude Code, in the survey~\cite[\S6.4]{DS}; the author decided that this note be written.
The proof was worked out by Claude Code under the author's direction.
\item \emph{Writing and typesetting.} The text and the \LaTeX{} source were written by Claude Code, in
English. The note contains no figures.
\item \emph{References.} Literature was searched and sourced with Claude Code. Quotations and statements
from \cite{Tak16}, \cite{TakD}, \cite{DS}, \cite{BDMMQS}, \cite{Sa2} and \cite{Bir} were checked against these
texts (for \cite{Bir}, against scans of the cited pages); \cite{Tak18} was not read and is cited
as in~\cite{DS}. The bibliographic data of \cite{Tak16} were checked against Crossref.
\item \emph{Code.} The checks of Section~\ref{sec:checks} were written and run by Claude Code in Python,
with SageMath~10.9 (Python~3.12.14). The deposited scripts, at
\href{https://doi.org/10.5281/zenodo.23183920}{doi:10.5281/zenodo.23183920}, are English versions of the working scripts, prepared by
Claude Code and run again before deposit, with the results stated in Section~\ref{sec:checks}.
\item \emph{Checking.} The draft was read in full by separate Claude Code agents that had not read
the working notes, twice on 5 October and once on 19 October 2026, and each later change was checked in a diff read; their findings were checked and incorporated where they held.
\end{itemize}
The author takes full responsibility for all contents.



\begin{thebibliography}{XXXXXX}

\bibitem[BDMMQS]{BDMMQS}
S.~Blackwell, A.~Das, S.~Mayer, L.~Moyar, F.~L.~Quraishi and R.~Stees, \emph{Classical invariants of spiral knots},
\href{https://arxiv.org/abs/2506.17889}{arXiv:2506.17889v2} (2026).

\bibitem[Bir74]{Bir}
J.~S.~Birman, \emph{Braids, Links, and Mapping Class Groups}, Annals of Mathematics Studies~82, Princeton
University Press, Princeton, New Jersey, copyright 1974.

\bibitem[DS]{DS}
A.~Di Prisa and O.~\c{S}avk, \emph{Turk's head knots and links: a survey},
\href{https://arxiv.org/abs/2409.20106}{arXiv:2409.20106v3} (2026); section and problem numbers refer to
this version. Journal version: Expo.\ Math.\ \textbf{44} (2026), 125777.

\bibitem[H]{H}
W.~A. Stanggassinger, \emph{Hoste's condition for all Turk's head knots}, note, Zenodo, first version 5 October 2026,
\href{https://doi.org/10.5281/zenodo.23154788}{doi:10.5281/zenodo.23154788}.

\bibitem[Sa]{Sa2}
S.~Saurabh, \emph{Spectral factorization and hypergeometric representations of the Alexander
polynomials of $Th(4,2n+1)$}, \href{https://arxiv.org/abs/2606.11301}{arXiv:2606.11301v1} (2026).


\bibitem[Tak16]{Tak16}
A.~Takemura, \emph{On a relation between Conway polynomials of $(m,n)$- and $(n,m)$-Turk's head links},
J.\ Knot Theory Ramifications \textbf{25} (2016), no.~14, 1650082.

\bibitem[TakD]{TakD}
A.~Takemura, \emph{Relations among Alexander--Conway polynomials of Turk's head links}, doctoral thesis, Kobe
University, degree conferred 25 March 2018, \href{https://hdl.handle.net/20.500.14094/D1007116}{hdl:20.500.14094/D1007116}.

\bibitem[Tak18]{Tak18}
A.~Takemura, \emph{On divisibility between Alexander polynomials of Turk's head links}, J.\ Knot Theory
Ramifications \textbf{27} (2018), no.~6, 1850040.

\end{thebibliography}
\end{document}